\def\B{\mathbb{B}}
\let\myacute=\'
\def\<{\langle}
\def\>{\rangle}
\def\N{\mathbb{N}}
\def\Z{\mathbb{Z}}
\def \begindm {\begin{displaymath}}
\def \enddm {\end{displaymath}}
\newtheorem{theo}{Theorem}[section]
\newtheorem{claim}{Claim}
\theoremstyle{definition}
\newtheorem{remark}[theo]{Remark}
\numberwithin{equation}{section}
\newtheorem{note}[theo]{Note}
\long\def\symbolfootnote[#1]#2{\begingroup\def\thefootnote{\fnsymbol{footnote}}\footnote[#1]{#2}\endgroup}
\title[Enrichments of Boolean algebras]{Enrichments of Boolean algebras: A Uniform Treatment of Some Classical and Some Novel Examples}
\author[J. Derakhshan]{Jamshid Derakhshan}
\address{University of Oxford, Mathematical Institute,
24-29 St Giles', Oxford OX1 3LB, UK}
\email{derakhsh@maths.ox.ac.uk}
\author[A. Macintyre]{Angus Macintyre}
\address{Queen Mary, University of London,
School of Mathematical Sciences, Queen Mary, University of London, Mile End Road, London E1 4NS, UK}
\email{angus@eecs.qmul.ac.uk}
\begin{document}

\keywords{Boolean algebras, Quantifier-elimination, Decidability, Model theory}

\subjclass[2000]{Primary 03G05, 03C10, 03C60, 06E25, Secondary 06E05, 03C35, 03C65}

\begin{abstract} We give a unified treatment of the model theory of various enrichments of infinite atomic Boolean algebras, 
with special attention to quantifier-eliminations, complete axiomatizations and decidability. 
A classical example is the enrichment by a predicate for the ideal of finite sets, 
and a novel one involves predicates giving congruence conditions on the cardinality of finite sets. We focus on three 
examples, and classify them by expressive power.
\end{abstract}

\maketitle

\section{Introduction}

In the course of some work on the model theory of adeles \cite{DM1,DM2}, we needed, in connection with the use of 
Feferman-Vaught Theorems \cite{FV}, to appeal to various classical results about the model theory of enrichments of 
Boolean algebras. For some of these, published proofs were hard to find. Relevant information can be 
found in \cite[Chapter 2, Section 6]{KK},\cite{CK}, and \cite{Ershov}. Moreover, we became aware, motivated by the 
examples of the adeles, that some novel enrichments were interesting. It turned out their model theory, and that of the 
classical examples, could be given a simple common treatment. This is the content of the present paper.

\section{Enrichments of infinite atomic Boolean algebras}

{\bf Example 1.} $T_1$ is the theory of infinite atomic Boolean algebras, in the Boolean language with $0,1,\cap,\cup,\neg$. 
The main models are $Powerset(I)$ (which denotes the powerset of $I$), for $I$ infinite. These are clearly not the only models, since no 
countable model is a full powerset algebra. 
A complete set of axioms \cite{CK} is given 
by saying that our models are infinite Boolean algebras such that every nonzero element has an atom below it. 

For this and the examples below we prove quantifier elimination by a variant of the standard back-and-forth criterion, 
in a form given in Hodges' book \cite[Exercise 4,pp.389]{Hodges}, 
especially well suited to our situation. To apply this criterion it is crucial to note that Boolean algebras are locally 
finite. We shall demonstrate the forth-stages of the argument, since the back-stages are completely analogous. 

We find it slightly more enlightening to work in the equivalent formalism of Boolean rings \cite{Birkhoff}, using 
the dictionary 
$$x.y=x\cap y$$
$$x+y=(x\cap \neg y)\cup (\neg x\cap y)$$
$$x\cap y= x.y$$
$$x\cup y=x+y+xy$$
Since all these ``definitions'' are quantifier-free, we can prove our quantifier-elimination by working in the categories 
of enriched Boolean rings. An ordering on a Boolean ring is defined by $x\leq y$ if and only if $x.y=x$.

The predicates needed for the quantifier elimination in this case are $C_n(x),~n\geq 1$, with the interpretation that there are 
at least $n$ distinct atoms $\alpha$ with $\alpha\leq x$.

Now suppose $\B_1$ and $\B_2$ are $\omega$-saturated models of $T$, $\{\alpha_1,\dots,\alpha_m\}$, 
$\{\beta_1,\dots,\beta_m\}$ are finite Boolean subrings $R_1,R_2$ of $\B_1,\B_2$ respectively, and 
$$F(\alpha_j)=\beta_j$$ 
is an isomorphism of Boolean rings, in addition respecting all $C_n$ and $\neg C_n$ (interpreted respectively in $\B_1,\B_2$). Now in fact 
$m=2^k$ for some $k\geq 1$. $R_1$ and $R_2$ are each atomic, but their atoms need not be atoms of $\B_1,\B_2$. If $k=1$, 
$$R_1=\{0,1\}\subset \B_1,$$
$$R_2=\{0,1\}\subset \B_2.$$
Note that if some $\alpha_j$ is an atom of $\B_1$, then 
$$\B_1\models C_1(\alpha_j)\wedge \neg C_2(\alpha_j)$$
so
$$\B_2\models C_1(\beta_j)\wedge \neg C_2(\beta_j),$$
so $\beta_j$ is an atom of $\B_2$.

We use systematically the following function:

\begindm
\sharp(x)=\begin{cases}
n&\textrm{if}\ C_n(x)\wedge \neg C_{n+1}(x)\\
\infty&\textrm{if no such $n$ exists}.
\end{cases}
\enddm

Note that any map respecting each $C_n$ and $\neg C_n$ preserves $\sharp$.

Now we do the back-and-forth argument. Let $\alpha$ be an element of $\B_1$ not in $R_1$. We try to extend $F$ to the Boolean 
ring 
$$R_1[\alpha]=\{r_1+s_1.\alpha: r_1,s_1\in R_1\}$$ 
of cardinal between $2^k$ and $2^{2k}$.

\

{\bf Note:} In any atomic Boolean algebra, every non-zero element is the supremum of the atoms 
below it (see \cite{Birkhoff}).

\

In particular, $R_1[\alpha]$ has atoms not in $R_1$. We get to $R_1[\alpha]$ from $R_1$ by successive 
adjunctions of the atoms of $R_1[\alpha]$, and so without loss of generality (for the extension problem) we can assume that 
$\alpha$ is an atom of $R_1[\alpha]$. We assume this henceforward.

\

{\bf Case 1}: $k=1$. 

$\alpha$ and $1-\alpha$ are atoms of $R_1[\alpha]$, though not necessarily of $\B_1$. Note that not both of 
$\sharp(\alpha)$ and $\sharp(1-\alpha)$ can be finite, but that and being nonzero is the only restriction on the 
pair $(\sharp(\alpha),\sharp(1-\alpha))$.

Clearly the extension problem is solved once one has a $\beta\in \B_2$ with $\beta \notin \{0,1\}$ and 
$$(\sharp(\beta),\sharp(1-\beta))=(\sharp(\alpha),\sharp(1-\alpha)).$$
If $\sharp(\alpha)$ is finite, it is trivial to get $\beta$ with $\sharp(\beta)=\sharp(\alpha)$ (just take $\beta$ a sum of 
$\sharp(\alpha)$ atoms), and then $\sharp(1-\beta)=\sharp(1-\alpha)$ automatically).

If $\sharp(1-\alpha)$ is finite, a dual argument works. If 
$$\sharp(\alpha)=\sharp(1-\alpha)=\infty,$$
we use $\omega$-saturation of  
$\B_2$ to get $\beta$ with 
$$\sharp(\beta)=\sharp(1-\beta)=\infty.$$

\

{\bf Case 2: $k>1$}.

Now $1=\gamma_1+\dots+\gamma_k$ where the $\gamma_i$'s are the atoms of $R_1$. It follows that 
for some $i_0\in \{1,\dots,k\}$ we must have 
$$0 < \alpha.\gamma_{i_0} < \gamma_{i_0}.$$
Indeed, if 
$\alpha.\gamma_i=\gamma_i$ for all the $\alpha.\gamma_i$ which are nonzero, where $i\in \{1,\dots,k\}$; then 
$$\alpha=\alpha.1=\alpha(\sum_{1\leq i\leq k} \gamma_i)=\sum_{1\leq i\leq k} \alpha.\gamma_i=
\sum_{1\leq i\leq k,\alpha.\gamma_i\neq 0} \gamma_i\in R_1,$$
a contradiction. 

Let $A=\alpha.\gamma_{i_0}$. Note that $A.\gamma_j=0$ for all $j\neq i_0$ in $\{1,\dots,k\}$ 
since $\gamma_j.\gamma_{i_0}=0$ for all $j\in \{1,\dots,k\}$. 
Since $\alpha$ is an atom of $R_1[\alpha]$, we must have $A=\alpha.\gamma_{i_0}=\alpha$.  So we have shown that $\alpha$ lies 
below a unique atom $\gamma_{i_0}$ of $R_1$. In the following we 
shall write $\gamma$ for $\gamma_{i_0}$. 

It follows that the atoms of $R_1[\alpha]$ are:

i) the atoms of $R_1$ distinct from $\gamma$;

ii) $\alpha$ and $\gamma-\alpha$.

Now an arbitrary element of $R_1[\alpha]$ can be represented uniquely in the form 
$$\epsilon_1.\alpha+\epsilon_2.(\gamma-\alpha)+\sum_{\tau}\epsilon_{\tau}.\tau$$
where the $\tau$-summation is over all atoms of $R_1$ different from $\gamma$, 
and the $\epsilon$'s are each $0$ or $1$. Note that the three summands are pairwise disjoint. 

Now clearly
$$\sharp(\epsilon_1.\alpha+\epsilon_2.(\gamma-\alpha)+\sum_{\tau}\epsilon_{\tau}.\tau)=
\epsilon_1\sharp(\alpha)+\epsilon_2\sharp(\gamma-\alpha)+\sum_{\tau}\epsilon_{\tau}\sharp(\tau)$$
So the extension problem this time is to find $\beta$ with
$$0<\beta<F(\gamma), ~\mathrm{and}$$
$$\sharp(\beta)=\sharp(\alpha),$$ 
$$\sharp(F(\gamma)-\beta)=\sharp(\gamma-\alpha).$$
Now the key issue is $\sharp(\gamma)$ ($=\sharp(F(\gamma))$).

\

{\bf Subcase 1}: $\sharp(\gamma)$ finite.

Then $\sharp(\gamma)=\sharp(\alpha)+\sharp(\gamma-\alpha)$, and both $\sharp(\alpha)$ and 
$\sharp(\gamma-\alpha)$ are greater than $0$. 

To solve the extension problem we simply choose $0<\beta<F(\gamma)$ 
with 
$$\sharp(\beta)=\sharp(\alpha),$$
and then it is automatic that 
$$\sharp(F(\gamma)-\beta)=\sharp(\gamma-\alpha).$$

\

{\bf Subcase 2}: $\sharp(\gamma)$ infinite.

Then (cf.\ the slightly different Case 1) not both $\sharp(\alpha)$ and $\sharp(\gamma-\alpha)$ can 
be finite, but there is no other constraint except that each is positive. 

The argument goes exactly as in Case 1, with an appeal to $\omega$-saturation when both 
$\sharp(\alpha)$ and $\sharp(\gamma-\alpha)$ are infinite.

We have proved the following. 
\begin{theo}\label{thm-1} The theory of infinite atomic Boolean algebras in the enriched Boolean 
language with all the $C_n$ is complete, decidable, has quantifier elimination, and is axiomatized by sentences 
saying that the models are infinite Boolean algebras and every nonzero element has an atom below it.
\end{theo}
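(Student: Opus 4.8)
The plan is to prove the theorem as a consequence of the back-and-forth construction that was carried out in detail above, using Hodges' variant of the back-and-forth criterion cited in the introduction. The overall strategy is to establish quantifier elimination first; completeness, decidability, and the stated axiomatization then follow by standard model-theoretic arguments.

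First I would verify that the back-and-forth construction above establishes quantifier elimination. The criterion requires that whenever $\B_1,\B_2$ are $\omega$-saturated models of $T_1$ and $F\colon R_1\to R_2$ is an isomorphism between finite Boolean subrings respecting all $C_n$ and $\neg C_n$, then for any $\alpha\in\B_1$ the partial isomorphism $F$ extends to one defined on $R_1[\alpha]$ (and symmetrically for the back-stage, which is analogous). The case analysis above ($k=1$ versus $k>1$, and within the latter $\sharp(\gamma)$ finite versus infinite) covers precisely these extension problems, reducing each to finding a suitable $\beta\in\B_2$ with prescribed values of $\sharp(\beta)$ and $\sharp(F(\gamma)-\beta)$. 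Since the reduction to the case where $\alpha$ is an atom of $R_1[\alpha]$ is justified by the Note on successive adjunction of atoms, and since every extension is by at most $2^{2k}$-many elements, the construction is complete. Once the forth-stages (and their duals) are verified, the Hodges criterion yields that $T_1$ admits quantifier elimination in the language enriched by all the $C_n$.

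Next I would deduce completeness. With quantifier elimination in hand, every sentence is equivalent modulo $T_1$ to a quantifier-free sentence, and a quantifier-free sentence in this language involves only the constants $0,1$ and the predicates $C_n$ evaluated at terms built from $0,1$; its truth value is determined outright (for instance $C_n(1)$ holds in every infinite model since every infinite atomic Boolean algebra has infinitely many atoms, while $C_n(0)$ always fails). Hence all models agree on all sentences, so $T_1$ is complete. For the axiomatization, I would argue that the stated axioms — infinite Boolean algebra with every nonzero element having an atom below it — already pin down a complete theory: these axioms are exactly the ones (cited from \cite{CK}) defining the models of $T_1$, and since $T_1$ is complete, any consistent recursive set of axioms entailing the theory suffices.

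Finally, decidability follows from completeness together with recursive axiomatizability: a complete theory with a recursively enumerable axiom set is decidable. The main obstacle, already handled in the construction above, is the case $k>1$ with $\sharp(\gamma)$ infinite, where one must appeal to $\omega$-saturation of $\B_2$ to realize the type specifying $\sharp(\beta)=\sharp(F(\gamma)-\beta)=\infty$; this is the only point where saturation is genuinely needed rather than an explicit finite witness, and it is what makes the back-and-forth go through on the distinguished atom $\gamma=\gamma_{i_0}$ below which $\alpha$ uniquely lies.
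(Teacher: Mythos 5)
Your proposal is correct and follows essentially the same route as the paper: the theorem is precisely the summary of the preceding back-and-forth argument (Hodges' criterion together with local finiteness of Boolean rings yielding quantifier elimination), with completeness then read off from the determined truth values of quantifier-free sentences in $0$, $1$ and the $C_n$ (using that an infinite atomic algebra has infinitely many atoms, so $C_n(1)$ holds and $C_n(0)$ fails), and decidability from completeness plus a recursively enumerable axiom set. One quibble: $\omega$-saturation is equally needed in Case 1 ($k=1$, i.e.\ $\gamma=1$) when $\sharp(\alpha)=\sharp(1-\alpha)=\infty$, not only in the $k>1$ subcase you single out.
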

\begin{note} We build on this example to get analogous results for several enriched formalisms. The essential point will 
 be that the choice of $\beta$ will now involve more constraints than in the above, and all our work will be to show these 
constraints can be met. 
\end{note}

\

{\bf Example 2.} We enrich the language of Example 1 by a unary predicate $Fin$, and extend the axioms of $T_1$ by axioms saying that 
$Fin$ is a proper ideal, and, for each $n<\omega$,
$$\forall x (\sharp(x)\leq n \Rightarrow Fin(x)).$$
We call these the {\it basic ideal axioms}. They can be stated for any ideal $J$ (in our case $J=Fin$). 

In addition we add the crucial

{\bf Main Axiom}: $\forall x (\neg Fin(x) \Rightarrow (\exists y)(y<x \wedge \neg Fin(y) \wedge \neg Fin(x-y)))$.

In this way we get a theory $T_2$. 
We interpret $Fin$ in $Powerset(I)$ as the ideal of {\it Finite} sets. Note that Theorem \ref{thm-non-def-fin} in Section \ref{sec-rel} 
shows that 
$Fin$ is not definable in the language of Example 1. So this is not a definitional expansion of the theory of infinite atomic 
Boolean algebras.

We will prove a quantifier elimination as in Example 1, using the $C_n$ and $Fin$. We use the same notation and formalism, except
that in addition the map $F$ now respects $Fin$ and $\neg Fin$. 

\

{\bf Case 1}: $k=1$.

We add $\alpha$ and $1-\alpha$ and want to extend $F$ to $R_1[\alpha]$. We already know how to handle the various 
possibilities for $\sharp(\alpha)$ and $\sharp(1-\alpha)$. 

Note that one can not have both $Fin(\alpha)$ and $Fin(1-\alpha)$. 

In the case $\sharp(\alpha)<\infty$, we have $Fin(\alpha)$ and $\neg Fin(1-\alpha)$. If we choose $\beta\notin \{0,1\}$ with 
$\sharp(\alpha)=\sharp(\beta)$ (as we can do by Example 1) it is automatic that $Fin(\beta)$ and 
$\neg Fin(1-\beta)$. 

Similarly 
if $\sharp(1-\alpha)<\infty$, we have $Fin(1-\alpha)$ and $\neg Fin(\alpha)$, and again choosing $\beta$ as in Example 1 gives 
$\sharp(1-\beta)=\sharp(1-\alpha)$, $Fin(1-\beta)$, and $\neg Fin(\beta)$. 

So the remaining case is 
$$\sharp(\alpha)=\sharp(1-\alpha)=\infty.$$
In this case more care is required as it leaves open the possibility that $Fin(\alpha)\wedge \neg Fin(1-\alpha)$, or 
$\neg Fin(\alpha)\wedge Fin(1-\alpha)$. (Can happen by compactness in a nonstandard model). 

\

{\bf Subcase 1}: $Fin(\alpha)$. 

We simply have to choose $\beta$ so that 
$$Fin(\beta),~\sharp(\beta)=\infty,$$
$$\sharp(1-\beta)=\infty,\ \neg Fin(1-\beta).$$
This is trivial by $\omega$-saturation.

\

{\bf Subcase 2}: $\neg Fin(\alpha)$.

There are two subcases: 

\

{\bf Subsubcase 2.1}: $Fin(1-\alpha)$

We have to use $\omega$-saturation {\it and} the Main Axiom. First use the Main Axiom to find some $\delta$ in $\B_2$ 
with
$$\neg Fin(\delta),\ \neg Fin(1-\delta).$$
Now use $\omega$-saturation to get $\mu$ with
$$\mu\leq 1-\delta,\ Fin(\mu),\ \sharp(\mu)=\infty.$$
Now take $\beta$ as $1-\mu$. Clearly $\neg Fin(\beta)$ but $Fin(1-\beta)$, and $
\sharp(1-\beta)=\infty$.

\

{\bf Subsubcase 2.2}: $\neg Fin(1-\alpha)$.

Just use Main Axiom to get $\beta$ with $\neg Fin(\beta)$ and $\neg Fin(1-\beta)$. 

Now we get to 

\

{\bf Case 2}: $k>1$. 

As before we need only make minor changes to the procedure in Example 1. We preserve the 
notation (especially for the atom $\gamma$), and try to extend $F$ to preserve $Fin$ (and $\neg Fin$) as well. 
As in Example 1, we can assume that $\alpha$ is an atom of $R_1[\alpha]$.

So we have 
$$Fin(\epsilon_1.\alpha+\epsilon_2.(\gamma-\alpha)+\sum_{\tau} \epsilon_{\tau}.\tau)\Leftrightarrow 
Fin(\epsilon_1.\alpha)\wedge Fin(\epsilon_2.(\gamma-\alpha)\wedge Fin(\sum_{\tau}\epsilon_{\tau}.\tau),$$
where the $\epsilon$'s are either $0$ or $1$. Note that the summands are disjoint.

Thus it is clear that $Fin$ and $\neg Fin$ are preserved by the choice of $\beta$ if and only if 
$$Fin(\alpha)\Leftrightarrow Fin(\beta)$$
and
$$Fin(\gamma-\alpha)\Leftrightarrow Fin(F(\gamma)-\beta)$$
(provided $0<\beta<F(\gamma)$).

If $Fin(\gamma)$ then clearly $Fin(\alpha)$ and $Fin(\gamma-\alpha)$, with the same for $\beta$ and $F(\gamma)-\beta$ if 
chosen as in Example 1. 

If $\neg Fin(\gamma)$ then at least one of $\alpha$ and $\gamma-\alpha$ satisfies $\neg Fin$, with no other constraint 
except that $\sharp(\alpha)$ and $\sharp(\gamma-\alpha)$ are each nonzero.

\

{\bf Subcase 1}: $Fin(\alpha)$ and $\sharp(\alpha)<\infty$.

This is handled just as in Example 1.

\

{\bf Subcase 2}: $Fin(\alpha)$ and $\sharp(\alpha)=\infty$. 

Then automatically $\neg Fin(\gamma-\alpha)$. So we need
$$\beta\leq F(\gamma),\ Fin(\beta),\ \sharp(\beta)=\infty.$$
This is easily done by $\omega$-saturation.

\

{\bf Subcase 3}: $Fin(\gamma-\alpha)$ and $\sharp(\gamma-\alpha)<\infty$.

Exactly like Subcase 1.

\

{\bf Subcase 4}: $Fin(\gamma-\alpha)$ and $\sharp(\gamma-\alpha)=\infty$. 

Exactly like Subcase 2. 

\

{\bf Subcase 5}: $\neg Fin(\alpha)$ and $\neg Fin(\gamma-\alpha)$. 

By Main Axiom applied below $F(\gamma)$, there exists $\beta \in \B_2$ such that 
$\neg Fin(\beta)$ and $\neg Fin(F(\gamma)-\beta)$.

This concludes the proof of quantifier-elimination in Example 2. We have proved the following 
\begin{theo}\label{thm-2} The theory of infinite atomic Boolean algebras with the 
set of finite sets distinguished is complete, decidable and 
has quantifier elimination with respect to all the $C_n$ and $Fin$. The axioms required for completeness are the 
axioms of $T_2$ together with 
sentences expressing that $Fin$ is a proper ideal, the sentence
$$
\forall x (\neg Fin(x) \Rightarrow (\exists y)(y<x \wedge \neg Fin(y) \wedge \neg Fin(x-y))).
$$
and, for each $n<\omega$, the sentence $\forall x (\sharp(x)\leq n \Rightarrow Fin(x))$.
\end{theo}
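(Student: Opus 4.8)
My plan is to regard the quantifier elimination as the heart of the theorem and then deduce completeness and decidability formally. The back-and-forth just completed — the forth stages of Cases~1 and~2 together with the symmetric back stages — is precisely the input to Hodges' criterion \cite{Hodges}: every isomorphism $F$ between finite Boolean subrings $R_1\subset\B_1$ and $R_2\subset\B_2$ of $\omega$-saturated models that respects all $C_n$, $\neg C_n$, $Fin$ and $\neg Fin$ extends to a larger such isomorphism. Since Boolean rings are locally finite, these finite partial isomorphisms are the correct objects, and the criterion delivers elimination of quantifiers down to Boolean combinations of the predicates $C_n$ and $Fin$.

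For completeness I would proceed in the usual way. With quantifiers eliminated, $T_2$ is complete if and only if it decides every quantifier-free sentence. The only constants are $0$ and $1$ and every closed term reduces to one of them, so a quantifier-free sentence is a Boolean combination of $C_n(0)$, $C_n(1)$, $Fin(0)$, $Fin(1)$ and equalities between $0$ and $1$. Each is settled by the axioms: $C_n(0)$ and $Fin(1)$ are false (no atom lies below $0$, and $Fin$ is a proper ideal, so $1\notin Fin$), while $Fin(0)$ is true and $C_n(1)$ holds for every $n$ since an infinite atomic Boolean algebra has infinitely many atoms. Thus all models of $T_2$ agree on quantifier-free sentences, and completeness follows.

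Decidability is then immediate. The axioms of $T_2$ are finitely many sentences together with the recursive schema $\forall x(\sharp(x)\le n\Rightarrow Fin(x))$, so the theory is recursively axiomatized; being also complete, it is decidable.

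The only place real work is required is inside the quantifier elimination itself, and this I expect to be the main obstacle. In a nonstandard model the predicate $Fin$ is decoupled from ``having infinitely many atoms,'' so by compactness one can have $\neg Fin(\alpha)$ with $\sharp(\alpha)=\infty$ or $Fin(\alpha)$ with $\sharp(\alpha)=\infty$. The delicate task is to find, inside $\B_2$ and below $F(\gamma)$, an element $\beta$ matching $\sharp(\beta)=\sharp(\alpha)$ and $\sharp(F(\gamma)-\beta)=\sharp(\gamma-\alpha)$ together with the $Fin$-status of $\alpha$ and $\gamma-\alpha$. When both pieces are non-finite the Main Axiom applied below $F(\gamma)$ supplies such a $\beta$ (Subcase~5), and when one piece is finite yet has $\sharp=\infty$ the Main Axiom must be combined with $\omega$-saturation to realize the mismatch (Subsubcase~2.1); these are the steps on which the argument turns.
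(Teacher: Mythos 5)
Your proposal is correct and follows the paper's own route: quantifier elimination via the back-and-forth of Cases 1 and 2 through Hodges' criterion (using local finiteness of Boolean rings, with the Main Axiom doing its work exactly in Subcase 5 and, combined with $\omega$-saturation, in Subsubcase 2.1), with completeness and decidability then deduced formally. Your explicit verification that the axioms decide all quantifier-free sentences --- $C_n(0)$ and $Fin(1)$ false, $Fin(0)$ and $C_n(1)$ true, closed terms reducing to $0$ or $1$ --- together with recursive axiomatizability is a standard step the paper leaves implicit, and it is carried out correctly.
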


\begin{remark} Note that $T_2$ is not complete if we remove the Main Axiom since in that case the finite-cofinite 
algebra on an index set $I$ (defined as the set of finite and cofinite subsets of $I$, and denoted $\B_{fin/cofin}(I)$) 
and the powerset $Powerset(I)$ are both models which are not elementarily equivalent. 

Note that in the Boolean language with $\{0,1,\cap,\cup,\neg\}$, $\B_{fin/cofin}(I)$ is an elementary substructure of 
$Powerset(I)$. This follows from Theorem \ref{thm-1} since $\B_{fin/cofin}(I)$ and $Powerset(I)$ have the same atoms.
\end{remark}
\begin{note} There are many complete extensions of the basic ideal axioms (for an ideal $J$). 
The Main Axiom gives a unique one, as does 
the axiom $\B/J\cong \{0,1\}$ (true in the finite-cofinite algebra). There are also examples where 
$\B/J\cong \B_k$, where $\B_k$ is a fixed finite Boolean algebra. 

A construction of such a Boolean algebra can be given as follows. 
Let $\B=\B_k^{\omega}$, where $\B_k$ is a $k$-element Boolean algebra, i.e.\ the functions $f:\omega \rightarrow \B_k$. 
Note that $\B$ is atomic with atoms the functions which are $0$ except at one $n\in \omega$, where the value is an atom. 
Let 
$$J=\{f\in \B: f(0)=0\}.$$
Then $\B/J\cong \B_k$.
\end{note}

{\bf Example 3.} 
This is built on top of Example 2, and seems to be novel. It is a kind of hybrid of Presburger arithmetic and the 
preceding example.
Example 1 is classical, done in \cite[Theorem 16,pp.70]{KK}, and Example 2 is classical, mentioned in \cite{FV}. Before 
presenting Example 3 we remark that it may be possible to find strengthenings of this example using well-behaved 
strengthenings of Presburger arithmetic (see \cite{Michaux}).

We add to the language of Example 2 
unary predicates $Res(n,r)(x)$ for $n,r\in \Z$, $n>0$, with the intended interpretation, in $Powerset(I)$, 
that $Fin(x)$ and the cardinal of $x$ is congruent to $r$ modulo $n$. There are various ``arithmetic'' axioms 
aside from the
$$\forall x (Res(n,r)(x)\Rightarrow Fin(x)),$$
for all $n,r$. For example, one clearly wants an axiom scheme stating that if $Fin(x)$ holds and $\sharp(x)=m$ where $m$ 
is congruent to $r$ modulo $n$, 
then $Res(n,r)(x)$ holds. Note that this implies 
$$Res(n,0)(0).$$
Also, we need
$$\forall x(Res(n,r)(x) \wedge r\equiv s (\mathrm{mod}~n) \Rightarrow Res(n,s)(x)),$$
and
$$\forall x(Res(n,r)(x)\wedge r \not\equiv s (\mathrm{mod}~n) \Rightarrow \neg Res(n,s)(x),$$
for all $n,r,s$. One also needs
$$\forall x(Res(m,r)(x)\Rightarrow Res(n,r)(x)),$$
if $n|m$, and 
$$\forall x(Fin(x)\Rightarrow \bigvee_{0\leq r< n} Res(n,r)(x)),$$
for all $m,n$.

Finally, we need ``finite additivity'' axioms, namely:
$$\forall x \forall y(x\cap y=0 \wedge Res(n,r)(x) \wedge Res(n,s)(y) \Rightarrow Res(n,r+s)(x\cup y)),$$
for all $n,r,s$; and
$$\forall x \forall y(x\cap y=0 \wedge Res(n,r)(x\cup y) \Rightarrow \bigvee_{\substack{0\leq s<n\\
0\leq t<n\\s+t\equiv r (\mathrm{mod}~n)}} Res(n,s)(x) \wedge Res(n,t)(y)),$$
for all $n,r$.

[It is easy to deduce from this the extension to the case of more than two variables, in inclusion/exclusion style.]

We call these the Boolean-Presburger axioms. Adding them to the axioms of $T_2$ we get a theory $T_3$. 
Now we try to elaborate the back-and-forth of Example 2, with the initial assumption that 
$F$ on $R_1$ respects all the $C_n,~Fin$, and all $Res(n,r)$. 

\

{\bf Case 1}: $k=1$. 

If neither $Fin(\alpha)$ nor $Fin(1-\alpha)$ there is nothing to prove, as all 
$Res(n,r)(\alpha)$ and $Res(n,r)(1-\alpha)$ are false, and the same will be true for the matching $\beta$ used in Example 2.

If (exactly) one satisfies $Fin$, say $\alpha$, we consider two subcases.

\

{\bf Subcase 1}: $\sharp(\alpha)<\infty$.

In this subcase, the truth of $Res(n,r)(\alpha)$ is determined by whether
$$\sharp(\alpha) \equiv r (\mathrm{mod}~ n).$$
This transfers automatically to the matching $\beta$ of Example 2. 

\

{\bf Subcase 2}: $\sharp(\alpha)=\infty$.

Note that any condition $\neg Res(n,r)(\alpha)$ is equivalent to a finite disjunction of various 
$Res(n,s)(\alpha)$, and so by saturation we need only get, for any $m\geq 1$, a matching $\beta_{\Sigma,m}$ satisfying,
$$\sharp(\beta_{\Sigma,m})\geq m$$
and
$$Res(n,r)(\beta_{\Sigma,m}),\ (n,r)\in \Sigma,$$
for any finitely many conditions $Res(n,r)(x)$, where $(n,r)\in \Sigma$ (where $\Sigma$ is a finite set), 
satisfied by $\alpha$. 

The argument needed is a slight variant of that used in the corresponding case of Example 2 (which depends on a similar 
argument in Example 1). All we need is $\sharp(\beta_{\Sigma,m})\geq m$ and $\sharp(\beta_{\Sigma,m})$ 
in the nonempty set (of nonnegative integers)
$$\{l: l\equiv r (\mathrm{mod}~n),\ (n,r)\in \Sigma\}.$$
Here all we need is that any Presburger definable nonempty set of the form 
$$\{l: l\equiv r (\mathrm{mod}~n),\ (n,r)\in \Sigma\}$$
has arbitrarily large members. This is obvious. 

This, with saturation, gives the required $\beta$.

\

{\bf Case 2}: $k>1$. 

Again we preserve the notation of Example 2 (so $\gamma$ is an atom of $R_1$ and 
$0<\alpha<\gamma$). 

We do the usual argument representing an arbitrary element of $R_1[\alpha]$ as a (disjoint) sum 
$$\epsilon_1.\alpha+\epsilon_2.(\gamma-\alpha)+\sum_{\delta}\epsilon_{\delta}.\delta$$
(see Examples 1 and 2).

By the disjointness, we see that just as $Fin$ (and $\neg Fin$) for such an element is determined by 
$Fin(\alpha)$ and $Fin(\gamma-\alpha)$, it is clear that then $Res(n,r)$ is determined by the $Res(n,r)(\alpha)$ 
and $Res(n,r)(\gamma-\alpha)$.

So a choice of $\beta$ will preserve the basic relations and functions if and only if 
$$Fin(\alpha) \Leftrightarrow Fin(\beta),$$
and 
$$Fin(\gamma-\alpha) \Leftrightarrow Fin(F(\gamma)-\beta),$$
(provided $0<\beta<F(\gamma)$),
and 
$$Res(n,r)(\alpha) \Leftrightarrow Res(n,r)(\beta)$$
and
$$Res(n,r)(\gamma-\alpha) \Leftrightarrow Res(n,r)(F(\gamma)-\beta).$$
We first consider the case when $Fin(\gamma)$. Then 
clearly $Fin(\alpha)$ and $Fin(\gamma-\alpha)$, with the same for $\beta$ and 
$F(\gamma)-\beta$ if chosen as in Example 1 (where there are subcases). But what about 
$Res(n,r)(\beta)$, which must match $Res(n,r)(\alpha)$?
We have to go back and look at the subcases:

\

{\bf Subcase 1:} $\sharp(\gamma)$ is finite.

As in Example 1 it is necessary to choose $0<\beta<F(\gamma)$ with $\sharp(\beta)=\sharp(\alpha)$.  It is then 
automatic that $Res(n,r)(\beta)$ matches $Res(n,r)(\alpha)$. 

\

{\bf Subcase 2} $\sharp(\gamma)$ is infinite (and $Fin(\gamma)$).

Then (cf.\ Case 2 in Example 1) not both $\sharp(\alpha)$ and $\sharp(\gamma-\alpha)$ can be finite, but there is no 
other constraint except that each is positive. 

If $\sharp(\alpha)$ is finite, then $Res(n,r)(\alpha)$ is determined by $\sharp(\alpha)$, and  
$Res(n,r)(\gamma-\alpha)$ is determined by disjointness. So in this case we need only match
$$\sharp(\beta)=\sharp(\alpha),$$ as in Example 1.

The case that $\sharp(\gamma-\alpha)$ is finite is dual.

The crucial case is when $\sharp(\alpha)$ and $\sharp(\gamma-\alpha)$ are both infinite. The matching problem is to get 
$\beta < F(\gamma)$ satisfying
$$\sharp(\beta)\geq m_1,$$
for all $m_1\in \N$, and 
$$\sharp(F(\gamma)-\beta)\geq m_2,$$
for all $m_2\in \N$, and
$$Res(n,r)(\alpha) \Rightarrow Res(n,r)(\beta),$$
for all $n,r \in \N$.

This is like Case 1, Subcase 2. The saturation argument follows as before by the argument about Presburger definable sets. 

Next we have to consider the situation when $\neg Fin(\gamma)$ holds. Then at least one of 
$\alpha$ and $\gamma-\alpha$ satisfies $\neg Fin$, with no other constraint except that 
$\sharp(\alpha)$ and $\sharp(\gamma-\alpha)$ are each nonzero.

Note that If $\neg Fin(\alpha)$ and $\neg Fin(\gamma-\alpha)$ both hold then 
the only way to have 
$$Fin(\epsilon_1.\alpha+\epsilon_2.(\gamma-\alpha)+\sum_{\delta} \epsilon_{\delta}.\delta)$$
is that 
$$\epsilon_1=\epsilon_2=0,$$
thus in the case $\neg Fin(\alpha)$ and $\neg Fin(\gamma-\alpha)$ both hold, the only elements of 
$R_1[\alpha]$ satisfying $Fin$ are in $R_1$, and so the $Res(n,r)$ are determined. So one just has to 
get $\beta$ with $\neg Fin(\beta)$ and $\neg Fin(F(\gamma)-\beta)$ as in Subcase 5 in Example 2. 

We go quickly through the other cases.

\

{\bf Subcase 1}: $Fin(\alpha)$ and $\sharp(\alpha)<\infty$.

Then $\sharp(\alpha)$ determines all $Res(n,r)(\alpha)$ and the choice of $\beta$ as in Example 2 gives the required 
correspondence.

\

{\bf Subcase 2}: $Fin(\alpha)$ and $\sharp(\alpha)=\infty$.

This is easily done by saturation, in the style of Subcase 2 of Case 1.

\

{\bf Subcase 3}: $Fin(\gamma-\alpha)$ and $\sharp(\gamma-\alpha)<\infty$.

Dual to Subcase 1.

\

{\bf Subcase 4}: $Fin(\gamma-\alpha)$ and $\sharp(\gamma-\alpha)=\infty$.

Dual to Subcase 2.

This concludes the proof, and we have shown the following.

\begin{theo}\label{thm-3} The theory of infinite atomic 
Boolean algebras in the enriched language with all the $C_n, Fin$, and all $Res(r,n)$, is complete, decidable, and has 
quantifier elimination. The axioms needed to get the elimination are the axioms of $T_2$ together with the Boolean-Presburger axioms 
as follows: 
$$\forall x (Res(n,r)(x)\Rightarrow Fin(x)),$$
$$\forall x (Fin(x) \wedge \sharp(x)=m \wedge m\equiv r (\mathrm{mod}~n) \Rightarrow Res(n,r)(x)),$$
$$\forall x(Res(n,r)(x) \wedge r\equiv s (\mathrm{mod}~n) \Rightarrow Res(n,s)(x)),$$
$$\forall x(Res(n,r)(x)\wedge r \not\equiv s (\mathrm{mod}~n) \Rightarrow \neg Res(n,s)(x),$$ 
$$\forall x(Res(m,r)(x)\wedge n|m \Rightarrow Res(n,r)(x)),$$
$$\forall x(Fin(x)\Rightarrow \bigvee_{0\leq r< n} Res(n,r)(x)),$$
for all $n,r,s,m$,
$$\forall x \forall y(x\cap y=0 \wedge Res(n,r)(x) \wedge Res(n,s)(y) \Rightarrow Res(n,r+s)(x\cup y)),$$
for all $n,r,s$; and
$$\forall x \forall y(x\cap y=0 \wedge Res(n,r)(x\cup y) \Rightarrow \bigvee_{\substack{0\leq s<n\\
0\leq t<n\\s+t\equiv r (\mathrm{mod}~n)}} Res(n,s)(x) \wedge Res(n,t)(y)),$$
for all $n,r$.
\end{theo}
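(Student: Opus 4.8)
The plan is to establish quantifier elimination by the back-and-forth criterion of Hodges, exactly as in Examples 1 and 2, and then to read off completeness and decidability. I would take two $\omega$-saturated models $\B_1,\B_2$ of $T_3$ and a partial isomorphism $F$ between finite Boolean subrings $R_1,R_2$ respecting every $C_n$, $Fin$, and $Res(n,r)$, and extend $F$ one atom at a time. As before, I reduce to adjoining a single $\alpha\notin R_1$ which is an atom of $R_1[\alpha]$, and split into the cases $k=1$ and $k>1$, where $|R_1|=2^k$. In the case $k>1$ the element $\alpha$ lies below a unique atom $\gamma$ of $R_1$, so the task becomes the production of $\beta$ with $0<\beta<F(\gamma)$ matching the $\sharp$-, $Fin$-, and $Res$-types of both $\alpha$ and $\gamma-\alpha$.

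The genuinely new ingredient over Example 2 is the family of predicates $Res(n,r)$, and the first step is to reduce their preservation to the atomic pieces. Writing a general element of $R_1[\alpha]$ as a disjoint sum $\epsilon_1\alpha+\epsilon_2(\gamma-\alpha)+\sum_\delta\epsilon_\delta\delta$, the finite-additivity axioms, together with the axiom that $Res(n,r)$ implies $Fin$, show that its $Res$-type is completely determined by the $Res$-types of $\alpha$ and of $\gamma-\alpha$. Hence, just as $Fin$ is controlled by $Fin(\alpha)$ and $Fin(\gamma-\alpha)$, the extension respects all $Res(n,r)$ as soon as $Res(n,r)(\alpha)\Leftrightarrow Res(n,r)(\beta)$ and $Res(n,r)(\gamma-\alpha)\Leftrightarrow Res(n,r)(F(\gamma)-\beta)$ for every $n,r$. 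This localizes the whole problem to matching the joint type of the two pieces.

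Next I would dispose of the cases by the size of $\sharp$. When $\sharp(\alpha)$ is finite, the axiom relating $Res$ to cardinality modulo $n$ forces every $Res(n,r)(\alpha)$ to be decided by the single integer $\sharp(\alpha)$, so a choice of $\beta$ with $\sharp(\beta)=\sharp(\alpha)$, as already arranged in Example 1, automatically transports all the $Res$-constraints. The only case requiring real work is $Fin(\alpha)$ with $\sharp(\alpha)=\infty$, a nonstandard-finite element, and this is where the Presburger flavour enters. Here each $\neg Res(n,r)(\alpha)$ is, by the covering and compatibility axioms, equivalent to a finite disjunction of $Res(n,s)(\alpha)$, so by $\omega$-saturation it suffices, for each finite set $\Sigma$ of $Res$-conditions satisfied by $\alpha$ and each $m$, to produce $\beta$ with $\sharp(\beta)\geq m$ and $\sharp(\beta)$ in the set $\{\,l : l\equiv r\ (\mathrm{mod}~n),\ (n,r)\in\Sigma\,\}$. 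The key fact is that this set, being a nonempty intersection of arithmetic progressions, contains arbitrarily large integers; feeding this into $\omega$-saturation of $\B_2$ yields the required $\beta$.

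Finally, completeness and decidability follow formally from the elimination. Since the only constants are $0$ and $1$, every quantifier-free sentence is a Boolean combination of the atomic sentences $C_n(0),C_n(1),Fin(0),Fin(1),Res(n,r)(0),Res(n,r)(1)$, and each of these is decided outright by the axioms (for instance $Fin(0)$, $\neg Fin(1)$, and $Res(n,0)(0)$); hence the theory is complete, and being recursively axiomatized it is decidable. I expect the main obstacle to be the nonstandard-finite subcase just described: one must verify that the finitely many congruence conditions inherited from $\alpha$ are genuinely consistent, so that the target progression is nonempty. This is exactly guaranteed by the fact that $\alpha$ itself satisfies them, while the covering and compatibility axioms ensure that the same congruence bookkeeping is available in $\B_2$.
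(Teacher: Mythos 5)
Your proposal is correct and follows essentially the same route as the paper's proof: the same Hodges back-and-forth between $\omega$-saturated models with $F$ respecting all $C_n$, $Fin$, and $Res(n,r)$, the same localization of the $Res$-type of an element of $R_1[\alpha]$ to the disjoint pieces $\alpha$ and $\gamma-\alpha$ via the finite-additivity axioms, and the same treatment of the one hard case ($Fin(\alpha)$ with $\sharp(\alpha)=\infty$) by noting that $\neg Res(n,r)$ is a finite disjunction of $Res(n,s)$ and that the nonempty intersection of arithmetic progressions $\{l:\ l\equiv r\ (\mathrm{mod}\ n),\ (n,r)\in\Sigma\}$ contains arbitrarily large integers, which $\omega$-saturation converts into the required $\beta$. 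Your closing derivation of completeness and decidability from the elimination (atomic sentences in $0,1$ decided by the axioms, plus recursive axiomatizability) is the standard argument the paper relies on as well.
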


\section{Relative Strength of the Three Formalisms}\label{sec-rel}

Note that for each example we have given a complete set of axioms in the appropriate formalism. Now we show that 
each example is more expressive than its predecessor.

\begin{theo}\label{thm-non-def-fin} In no model of the theory of infinite atomic Boolean algebras can we 
define in the formalism of Example 1, a predicate $Fin$ satisfying the axioms given in Example 2.
\end{theo}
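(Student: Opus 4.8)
The plan is to argue by contradiction using the quantifier elimination of Theorem~\ref{thm-1}. Suppose that in some model $\B\models T_1$ there is a formula $\phi(x,\bar a)$, with parameters $\bar a$ from $\B$, whose solution set $Fin=\{x:\B\models\phi(x,\bar a)\}$ satisfies all the axioms of Example~2. By Theorem~\ref{thm-1} we may replace $\phi$ by an equivalent quantifier-free formula $\psi(x,\bar a)$ in the language with the $C_n$, and we let $N$ be the largest index such that $C_N$ occurs in $\psi$. The heart of the matter is that $\psi$ can only count atoms up to $N$: writing $R=\langle\bar a\rangle$ for the finite Boolean subring generated by the parameters, with atoms $c_1,\dots,c_k$, every term in $x,\bar a$ restricts on each $c_j$ to one of $0,\ c_j,\ x\cap c_j,\ c_j-(x\cap c_j)$, so the truth of $\psi(x,\bar a)$ is determined by the pattern of $x$ on the $c_j$ (whether $x\cap c_j$ is $0$, all of $c_j$, or strictly between) together with the bounded counts $\min(\sharp(x\cap c_j),N)$ and $\min(\sharp(c_j-(x\cap c_j)),N)$.

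Next I would locate a suitable atom of $R$. Since $Fin$ is a proper ideal we have $\neg Fin(1)$, and as $1=c_1\cup\dots\cup c_k$ with the ideal closed under finite unions, some atom $c:=c_{j_0}$ satisfies $\neg Fin(c)$. The basic ideal axioms $\forall x(\sharp(x)\le n\Rightarrow Fin(x))$ then force $\sharp(c)=\infty$, their contrapositive being that $\neg Fin$ implies $\sharp=\infty$. Applying the Main Axiom below $c$ produces $y$ with $0<y<c$, $\neg Fin(y)$ and $\neg Fin(c-y)$, and the same contrapositive gives $\sharp(y)=\sharp(c-y)=\infty$.

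Finally I would manufacture a companion element on the finite side. As $\sharp(c)=\infty$ there are at least $N$ atoms of $\B$ below $c$; let $x_1$ be the join of $N$ of them, so $\sharp(x_1)=N$ and hence $Fin(x_1)$ by the basic ideal axiom, while $\sharp(c-x_1)=\infty$. Now $x_1$ and $y$ both lie strictly between $0$ and $c$ and are $0$ on every other atom of $R$, and both yield the bounded-count pair $(\min(\sharp(\cdot),N),\min(\sharp(c-\cdot),N))=(N,N)$. Since $\psi$ sees only $C_n$ with $n\le N$, it cannot separate $\sharp=N$ from $\sharp=\infty$, so $\psi(x_1,\bar a)\Leftrightarrow\psi(y,\bar a)$; but $Fin(x_1)$ holds and $Fin(y)$ fails, a contradiction.

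The step I expect to require the most care is the reduction justifying that the truth of $\psi$ factors through the bounded counts $\min(\sharp(\cdot),N)$ on the atoms of $R$, i.e.\ making precise the term analysis sketched in the first paragraph, which is exactly the computation underlying the quantifier elimination of Theorem~\ref{thm-1}. It is also worth emphasizing where the Main Axiom is indispensable: without it the atom $c$ might admit no splitting into two non-$Fin$ parts, as happens in the finite--cofinite algebra $\B_{fin/cofin}(I)$, and then no element $y$ with $\sharp(y)=\sharp(c-y)=\infty$ and $\neg Fin(y)$ need exist, so the argument --- correctly --- produces no contradiction there.
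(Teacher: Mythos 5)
Your proof is correct, but it takes a genuinely different route from the paper's. The paper reduces (implicitly, via the completeness supplied by Theorem~\ref{thm-1}) to the standard model $Powerset(\omega)$ and a \emph{parameter-free} defining formula; it then passes to disjunctive normal form, observes that the only one-variable terms are $0,1,x,1+x$, and kills each disjunct by hand: a disjunct containing some $\neg C_r(x)$ captures only elements of size $<r$, one containing $\neg C_s(1+x)$ captures only infinite elements, and a purely positive disjunct $C_l(x)\wedge C_m(1+x)$ is satisfied by an infinite coinfinite set, which is not $Fin$ --- so no disjunction can carve out exactly the finite sets. You instead stay in an arbitrary model, allow parameters $\bar a$, locate an atom $c$ of the finite parameter algebra with $\neg Fin(c)$, and play the Main-Axiom element $y$ (with $\sharp(y)=\sharp(c-y)=\infty$) against the join $x_1$ of $N$ genuine atoms below $c$. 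Your key reduction --- that the truth of the quantifier-free $\psi(x,\bar a)$ factors through the truncated counts $\min(\sharp(x\cap c_j),N)$ and $\min(\sharp(c_j-(x\cap c_j)),N)$ on the atoms $c_j$ of the parameter algebra --- is sound, including the point you flag: equalities $t=0$ are absorbed into the counts because atomicity makes $t=0$ equivalent to $\neg C_1(t)$ (take $N\geq 1$ without loss). What your route buys: the stronger statement with parameters, proved uniformly in every model, with no reduction to $Powerset(\omega)$ (a reduction the paper leaves implicit, and which is less routine once parameters are present, since one would have to realize the parameter type in the standard model). What the paper's route buys: a fully explicit normal-form computation with only four terms and a mechanical case analysis, invoking nothing beyond the existence in $Powerset(\omega)$ of an infinite coinfinite set. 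Both arguments turn on the same core fact, which you isolate cleanly and which also underlies the paper's case split: formulas built from the $C_n$ measure $\sharp$ only up to a finite threshold $N$, hence cannot separate $\sharp=N$ from $\sharp=\infty$; and your closing remark that the finite--cofinite algebra blocks the argument absent the Main Axiom correctly mirrors the paper's own remark after Theorem~\ref{thm-2}.
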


\begin{proof} It suffices to show this for $Powerset(\omega)$. Suppose $\Phi(x)$ defines the intended interpretation of $Fin$ in 
$Powerset(\omega)$. $\Phi(x)$ can be taken as a Boolean combination of conditions 
$$p(x)=0,~C_k(q(v)),$$
for $k\leq N \in \N$, where $p(x)$ and $q(x)$ are Boolean ring polynomials. 

Going to disjunctive normal form 
we see that $\Phi(x)$ can be taken as a finite disjunction of conditions
$$p_1(x)=0 \wedge \dots \wedge p_k(x)=0\wedge p_{k+1}(x)\neq 0\wedge \dots \wedge p_{k+l}(x)\neq 0 \wedge$$
$$C_{s_1}(t_1(x)) \wedge \dots \wedge C_{s_d}(t_d(x))\wedge \neg C_{k_1}(r_{1}(x))\wedge \dots \wedge \neg 
C_{k_m}(r_{m}(x)).$$
Note that $k_i,s_j\geq 1$ and all the polynomials occurring are of one of the forms 
$$0,\ 1,\ 1+x,\ x.$$
Note that $C_k(0)$ is false and $C_k(1)$ is true. Also 
$$1+x=0\Leftrightarrow x=1.$$
So our conjunction can be taken as Boolean combination of 
$$x=1,\ x=0,$$
$$C_l(x),\ C_m(1+x).$$
Only finitely many $l,m$ occur in the disjunctive normal form. 

We need only consider conjunctions of the form
$$C_l(x) \wedge C_m(1+x) \wedge \neg C_r(x) \wedge \neg C_s(1+x),$$
where not each of $l,m,r,s$ need occur. Note only one of $r,s$ can occur.

Consider each conjunction separately. Those which contain some $\neg C_r(x)$ can define only the set of elements 
$a$ with $\sharp(a)<r$. So we need consider only conjunctions which contain no $\neg C_r(x)$.  

If in such a conjunction some $\neg C_s(1+x)$ occurs, the conjunction can define only sets $a$ with 
$\sharp(1+a)<s$, in particular only elements with $\sharp(a)=\infty$.

So we need only consider conjunctions
$$C_l(x)\wedge C_m(1+x),$$
where one of $C_l,C_m$ may be missing. Any $a\in Powerset(\omega)$ with 
$\neg Fin(a)\wedge \neg Fin(1+a)$ will not satisfy this, 
contradiction.
\end{proof}

The case of Example 3 is harder. We will show the following.

\begin{theo}\label{thm2} If $p$ is a prime, the predicates $Res(p,r)(x)$ are not definable in 
 $Powerset(\omega)$ from $Fin,~C_k$ and any $Res(q^m,r)$ for primes $q\neq p$.
\end{theo}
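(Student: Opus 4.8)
The plan is to argue, as in the proof of Theorem \ref{thm-non-def-fin}, by exhibiting two finite subsets $a,b\subseteq\omega$ that satisfy exactly the same formulas in the restricted language---the one with $0,1,\cap,\cup,\neg$, all $C_k$, $Fin$, and only those $Res(q^m,r)$ with $q\neq p$---yet disagree on $Res(p,r_0)$. The feature that makes this possible (and that is invisible at the level of automorphisms of $Powerset(\omega)$, all of which come from permutations of $\omega$ and hence preserve every $Res(n,r)$) is that the restricted language cannot pin down the cardinality of a finite set modulo $p$ once that cardinality is large.

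First I would establish quantifier elimination for the restricted theory. This is immediate from the work already done: the back-and-forth of Theorem \ref{thm-3} goes through verbatim after deleting the predicates $Res(p,\cdot)$ and every constraint mentioning them, since the only place the $Res$-predicates enter is the saturation step, where one needs a nonempty Presburger-definable set $\{l: l\equiv r\ (\mathrm{mod}\ n),\ (n,r)\in\Sigma\}$ to have arbitrarily large members; this remains true when $\Sigma$ ranges only over moduli that are prime powers $q^m$ with $q\neq p$. Consequently every formula in one free variable in the restricted language is, modulo the restricted theory, equivalent to a quantifier-free one, and it suffices to compare $a$ and $b$ on atomic formulas.

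Next I would carry out the polynomial reduction exactly as in Theorem \ref{thm-non-def-fin}: since $x^2=x$, every Boolean-ring polynomial in the single variable $x$ is one of $0,1,x,1+x$, so the relevant atomic conditions on a finite set $a$ are $x=0$, $x=1$, $C_l(a)$, $C_l(1+a)$, $Fin(a)$, $Fin(1+a)$, and $Res(q^m,r)(a)$, $Res(q^m,r)(1+a)$ with $q\neq p$. For any finite nonempty proper $a$ the complement $1+a$ is infinite, so $Fin(1+a)$ and all $Res(q^m,r)(1+a)$ fail while every $C_l(1+a)$ holds, independently of the particular $a$; thus the only conditions whose truth depends on which finite $a$ is chosen are $C_l(a)\iff\sharp(a)\geq l$ and $Res(q^m,r)(a)\iff\sharp(a)\equiv r\ (\mathrm{mod}\ q^m)$.

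Finally comes the arithmetic heart, a Chinese Remainder argument. Fix any target $r_0$ and suppose $\Phi(x)$ in the restricted language defined $Res(p,r_0)$. Let $N$ bound all indices $l$ occurring in $\Phi$ and let $M$ be the least common multiple of all moduli $q^m$ occurring in $\Phi$; every prime factor of $M$ is distinct from $p$, so $\gcd(M,p)=1$. Choose an integer $s\geq\max(N,1)$ with $s\equiv r_0\ (\mathrm{mod}\ p)$ and set $t=s+M$; then $s\equiv t\ (\mathrm{mod}\ M)$ while $t-s=M\not\equiv 0\ (\mathrm{mod}\ p)$, so $t\not\equiv r_0\ (\mathrm{mod}\ p)$. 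Taking finite sets $a,b\subseteq\omega$ with $\sharp(a)=s$ and $\sharp(b)=t$, both cardinalities exceed $N$ and are congruent modulo every $q^m$ dividing $M$, so $a$ and $b$ agree on every atomic condition available to $\Phi$, whence $\Phi(a)\iff\Phi(b)$; but $Res(p,r_0)(a)$ holds and $Res(p,r_0)(b)$ fails, a contradiction. I expect the main obstacle to be the first step---checking that the restricted theory genuinely eliminates quantifiers, so that indistinguishability at the atomic level upgrades to full indistinguishability; the coprimality $\gcd(M,p)=1$, which is the one place the hypothesis $q\neq p$ is used, then makes the final construction routine. To obtain non-definability allowing parameters $\bar c$, one relativizes the whole construction to an infinite atom of the finite subalgebra generated by $\bar c$ (such an atom exists since $\omega$ is infinite), choosing $a$ and $b$ to agree with each other off that atom.
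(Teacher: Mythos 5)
Your proposal is correct, but it takes a genuinely different route from the paper's. The paper argues semantically in a nonstandard model: it takes $R\models Th(Powerset(\omega))$ containing a pseudofinite $b$ with $C_k(b)$ for all $k$ and $Res(p,s)(b)$, builds a non-principal ultrafilter $D$ containing $b$ and all ``standard cofinite'' elements, adjoins $N$ new atoms below $D$ with $N\equiv 0 \pmod{\pi}$ (where $\pi$ is the product of the finitely many moduli $q_j^{l_j}$ occurring in a putative definition) and $N\equiv 1\pmod p$, and reinterprets the predicates on $R[\gamma_1,\dots,\gamma_N]$ so that $Fin$, the $C_k$, and the relevant $Res(q^m,r)$ are preserved on $R$ while $Res(p,s)(b)$ flips. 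You instead stay inside the standard model: you note that the reduct theory (deleting $Res(p,\cdot)$) still has quantifier elimination by the same back-and-forth --- the one point needing care, which you flag, being that finite congruence systems at prime powers $q^m$, $q\neq p$, realized by a finite element remain solvable with arbitrarily large solutions (within-prime compatibility plus the Chinese Remainder Theorem across distinct primes); you then reduce a one-variable formula to a Boolean combination of $x=0$, $x=1$, $C_l(x)$, $C_l(1+x)$, $Fin$- and $Res(q^m,r)$-conditions, and exhibit two standard finite sets of sizes $s$ and $s+M$ (with $M$ the lcm of the moduli mentioned, so $\gcd(M,p)=1$, and $s$ above all $C_l$-thresholds with $s\equiv r_0\pmod p$) that agree on all these conditions but disagree on $Res(p,r_0)$. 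The same CRT idea is the arithmetic core of both proofs: your $t-s=M$ coprime to $p$ plays exactly the role of the paper's $N\equiv 0\pmod\pi$, $N\equiv 1\pmod p$. What your route buys: it is more elementary (no nonstandard models, no ultrafilter or atom-adjunction construction), the witnesses are standard finite sets, and it makes explicit a step the paper leaves tacit --- since a putative defining formula may contain quantifiers, one needs quantifier elimination for the restricted reduct to upgrade atomic indistinguishability to full indistinguishability, whereas the paper only verifies preservation of the atomic predicates under $R\hookrightarrow R[\gamma_1,\dots,\gamma_N]$. Your closing remark handling parameters, by relativizing to an infinite atom of the finite subalgebra generated by the parameters, is also sound and yields a slightly stronger conclusion than the parameter-free statement the paper proves.
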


{\bf Note:} $Res(p,r)$ is definable from the $Res(p^k,s)$ where $s \equiv r~\mathrm{mod}~p$, for any $k>1$.

\begin{proof} We give the proof for $p=2$ and no other primes, and explain at the end the general method. 
We work in a nonstandard model $\B$ of $Th(Powerset(\omega))$ in the formalism of Example 3. What we need in this 
model is a $b\in Fin$ such that $C_k(b)$ for all $k\in \N$, and $Res(2,0)(b)$. For example, use 
compactness, or an ultrapower of $Powerset(\omega)$. 

As usual we replace $\B$ by the corresponding Boolean ring $R$. Think of $b$ as a nonstandard finite even element. In $R$ 
we have the ideal $Fin$, and the filter $Cofin$ (i.e. the $c$ such that $Fin(1+c)$). However we need to consider also 
$$Cofin^{ST}=\{c: \neg C_k(1+c)~\mathrm{for~some}~k\},$$
i.e. the ``standard'' cofinite sets.

\begin{claim} $\{b\}\cup Cofin^{ST}$ has the finite intersection property.

\end{claim}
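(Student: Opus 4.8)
The plan is to unwind the finite intersection property into a comparison of sizes via $\sharp$, exploiting the mismatch between $b$, which is nonstandardly large ($\sharp(b)$ exceeds every standard integer), and the elements of $Cofin^{ST}$, whose complements are standard-finite. Recall that the finite intersection property asserts that every meet of finitely many members of $\{b\}\cup Cofin^{ST}$ is nonzero; in the Boolean ring $R$ the meet is the product, and $x.y=0$ is equivalent to $x\leq 1+y$.

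First I would record two first-order facts that are true in $Powerset(\omega)$ and hence, by elementary equivalence, hold in $\B$. The first is monotonicity of $\sharp$: for each $n$, $\forall x\,\forall y\,(x\leq y\wedge C_n(x)\Rightarrow C_n(y))$. The second is closure of $Cofin^{ST}$ under finite meets: since $1+(c_1\cdots c_n)$ is the union $\bigcup_i(1+c_i)$ of the complements, and $\sharp$ is subadditive on unions, if each $c_i$ satisfies $\neg C_{k_i}(1+c_i)$ then $1+(c_1\cdots c_n)$ has $\sharp$ bounded by the standard integer $M=\sum_i(k_i-1)$, so $\neg C_{M+1}(1+(c_1\cdots c_n))$ and therefore $c_1\cdots c_n\in Cofin^{ST}$. (Each instance of subadditivity, for fixed bounds, is itself a first-order sentence true in $Powerset(\omega)$.)

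Next I would reduce to a single inequality. A finite meet drawn from $\{b\}\cup Cofin^{ST}$ has the form $b^{\varepsilon}.c_1\cdots c_n$ with $\varepsilon\in\{0,1\}$ and each $c_i\in Cofin^{ST}$. Put $d=c_1\cdots c_n$, so by the closure fact $d\in Cofin^{ST}$ with $\sharp(1+d)\leq M$ for some standard $M$. If $\varepsilon=0$ the meet is $d$, which is nonzero: were $d=0$ we would have $1+d=1$ and $\sharp(1+d)=\infty$, contradicting $M<\infty$. Thus the only remaining content is the case $\varepsilon=1$, where I must show $b.d\neq 0$.

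Finally, the key step. Suppose toward a contradiction that $b.d=0$, i.e. $b\leq 1+d$. By monotonicity of $\sharp$ this forces $\sharp(b)\leq\sharp(1+d)\leq M$, that is $\neg C_{M+1}(b)$. But $b$ was chosen so that $C_k(b)$ holds for all $k\in\N$, in particular $C_{M+1}(b)$, a contradiction. Hence $b.d\neq 0$, and the finite intersection property follows. I expect no genuine obstacle here: the heart of the matter is a pigeonhole comparison of $\sharp(b)$ against a standard-finite bound, and the only point requiring care is the transfer of monotonicity and of the subadditivity bound for $\sharp$ from $Powerset(\omega)$ to the elementarily equivalent model $\B$.
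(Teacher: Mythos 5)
Your proof is correct and takes essentially the same route as the paper's: the paper likewise reduces to a single meet $\tau\in Cofin^{ST}$ (treating closure of $Cofin^{ST}$ under finite intersections as clear) and observes that $b.\tau=0$ gives $b\leq 1+\tau$, forcing $b$ to be standard finite, contradicting $C_k(b)$ for all $k\in\N$. You have merely made explicit the transfer of monotonicity and subadditivity of $\sharp$ from $Powerset(\omega)$ that the paper leaves implicit.
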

\begin{proof} Clearly $Cofin^{ST}$ has the finite intersection property, and if 
$$b.\tau=0$$
for some $\tau\in Cofin^{ST}$, we have 
$$b\leq 1+\tau,$$
so $b$ is standard finite.
\end{proof}

So we get a non-principal ultrafilter $D$ containing $b$. We now extend $R$ by an element $\gamma$, with the conditions 
$$\gamma.\alpha=\gamma,~\mathrm{if}~\alpha\in D,$$
$$\gamma.\alpha=0,~\mathrm{if}~\alpha \notin D.$$
We use compactness to show that the conditions on 
$\gamma$ are finitely satisfiable in $R$.

If we have finitely many conditions 
$$\gamma.\alpha_1=\gamma,\dots,\gamma.\alpha_r=\gamma,$$
$$\gamma.\beta_1=0,\dots,\gamma.\beta_s=0$$
with $\alpha_1,\dots,\alpha_r\in D, \ \beta_1,\dots,\beta_s \notin D$, then 
$$\alpha_1\cap \dots \cap \alpha_r \in D,$$
and is infinite, and 
$$\beta_1 \cup \dots \cup \beta_r \notin D.$$
Therefore there exists
$$\delta \in (\alpha_1\cap \dots \cap \alpha_r) \setminus (\beta_1 \cup \dots \cup \beta_s),$$
since otherwise 
$$(\alpha_1\cap \dots \cap \alpha_r) \subset (\beta_1 \cup \dots \cup \beta_s),$$
which contradicts $D$ being an ultrafilter. This proves finite satisfiability. Note that the argument shows
there are in fact infinitely many such $\delta$. Thus we get the extension $R[\gamma]$. 

Note that $\gamma \notin R$: if $\gamma\in R$, then there is an atom $s$ of $R$ below $\gamma$. Since 
$s$ is a standard finite set,
$$1+s\in Cofin^{ST},$$
so $1+s\in D$, hence $s\notin D$, thus $\gamma.s=0$. 

Now $R[\gamma]=\{r+s.\gamma: r,s\in R\}$.
\begin{claim} $\gamma$ is an atom in $R[\gamma]$.
 
\end{claim}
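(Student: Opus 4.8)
The plan is to show directly that the only elements of $R[\gamma]$ lying below $\gamma$ (in the Boolean ordering $x\leq y \iff x.y=x$) are $0$ and $\gamma$ itself; since $\gamma\neq 0$ — this is part of the type being realized, and is consistent precisely because the finite-satisfiability argument produced infinitely many, hence nonzero, witnesses $\delta$ — this is exactly the assertion that $\gamma$ is an atom of $R[\gamma]$.

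First I would record the two multiplication rules that the defining conditions impose. We are in a Boolean ring, so $\gamma^2=\gamma$, and since $D$ is an ultrafilter every $a\in R$ satisfies either $a\in D$ or $a\notin D$; hence the conditions on $\gamma$ give
$$a.\gamma=\gamma \ \text{ if } a\in D, \qquad a.\gamma=0 \ \text{ if } a\notin D,$$
for all $a\in R$. These, together with the already-established fact that $\gamma\notin R$, are the only things I will use.

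Next, take an arbitrary $x=r+s.\gamma\in R[\gamma]$ (with $r,s\in R$) and suppose $x\leq\gamma$, i.e.\ $x.\gamma=x$. Using $\gamma^2=\gamma$ I compute $x.\gamma=(r+s.\gamma).\gamma=r.\gamma+s.\gamma$, and comparing with $x=r+s.\gamma$ yields $r.\gamma=r$. Now apply the multiplication rules to $r\in R$: if $r\in D$ then $r.\gamma=\gamma$, so $r.\gamma=r$ would force $\gamma=r\in R$, contradicting $\gamma\notin R$; therefore $r\notin D$, whence $r.\gamma=0$ and so $r=0$. Thus $x=s.\gamma$, and the same rules give $x=\gamma$ (if $s\in D$) or $x=0$ (if $s\notin D$).

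This exhausts the elements below $\gamma$, so $\gamma$ is an atom. The only slightly delicate step is the implication $r.\gamma=r\Rightarrow r=0$, where both the ultrafilter dichotomy and the previously proved $\gamma\notin R$ are essential; the remaining manipulations are routine Boolean-ring computations that I would not belabour.
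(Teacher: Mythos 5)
Your proof is correct and takes essentially the same route as the paper's: assume $(r+s.\gamma).\gamma=r+s.\gamma$, cancel to get $r.\gamma=r$, rule out $r\in D$ using the previously established $\gamma\notin R$, conclude $r=0$, and then observe $s.\gamma\in\{0,\gamma\}$. Your extra remarks (making the ultrafilter dichotomy explicit and noting that $\gamma\neq 0$ follows since $0\in R$ while $\gamma\notin R$) only spell out what the paper leaves implicit.
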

\begin{proof} Assume that $(r+s.\gamma).\gamma=r+s.\gamma$. Then $r.\gamma+s.\gamma=r+s.\gamma$, so 
$r.\gamma=r$. 

If $r\in D$, then $r.\gamma=\gamma$, hence $\gamma=r\in R$, contradiction. Hence $r\notin D$, so 
$0=r.\gamma=r$, so $r=0$. Now $s.\gamma=\gamma$ or $s.\gamma=0$. So $\gamma$ is an atom of 
$R[\gamma]$.
\end{proof}

\begin{claim} All the atoms of $R$ are also atoms of $R[\gamma]$.
 \end{claim}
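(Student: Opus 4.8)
The plan is to exploit the very simple structure of the extension $R[\gamma]$. First I would record that every element of $R[\gamma]$ lies in $R\cup(R+\gamma)$: for any $s\in R$ the defining conditions on $\gamma$ give $s\cdot\gamma=\gamma$ when $s\in D$ and $s\cdot\gamma=0$ when $s\notin D$, so $s\cdot\gamma\in\{0,\gamma\}$ and hence the typical element $r+s\cdot\gamma$ is either $r$ or $r+\gamma$. Thus $R[\gamma]=R\cup(R+\gamma)$, and since $\gamma\notin R$ is already established these two pieces are disjoint.

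The second ingredient is that $\gamma$ annihilates every atom of $R$. Let $a$ be an atom of $R$, so $\sharp(a)=1$ and $a$ is a standard finite set. Taking $c=1+a$, we have $1+c=a$ with $\neg C_2(a)$, so $1+a\in Cofin^{ST}\subseteq D$. Since $D$ is an ultrafilter and $a\cdot(1+a)=0$, we cannot have both $a$ and $1+a$ in $D$; hence $a\notin D$ and therefore $\gamma\cdot a=0$. This is exactly the computation already used to prove $\gamma\notin R$, now applied to an arbitrary atom rather than to one lying below $\gamma$.

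With these two facts the claim is immediate. Fix an atom $a$ of $R$ and suppose $t\in R[\gamma]$ satisfies $0\le t\le a$, i.e.\ $t\cdot a=t$. If $t\in R$, then $t\le a$ inside $R$ forces $t\in\{0,a\}$ because $a$ is an atom of $R$. If instead $t=r+\gamma$ with $r\in R$, then using $\gamma\cdot a=0$ I compute $t\cdot a=r\cdot a+\gamma\cdot a=r\cdot a\in R$; but $t\cdot a=t=r+\gamma$ would then give $\gamma=r\cdot a+r\in R$, contradicting $\gamma\notin R$. Hence the second case never occurs, every $t$ below $a$ lies in $\{0,a\}$, and $a$ is an atom of $R[\gamma]$.

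I do not expect a serious obstacle here. The only point requiring care is the annihilation step $\gamma\cdot a=0$, which rests on the observation that atoms of $R$ are standard finite, so that their complements fall into $Cofin^{ST}$ and hence into $D$; everything else follows directly from the decomposition $R[\gamma]=R\cup(R+\gamma)$ and from $\gamma\notin R$, both already available.
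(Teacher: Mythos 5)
Your proof is correct, and it takes a somewhat different route from the paper's. The paper's own argument assumes $(r+s\cdot\gamma)\cdot t=r+s\cdot\gamma$ for an atom $t$ of $R$, invokes the immediately preceding claim that $\gamma$ is an atom of $R[\gamma]$ (distinct from $t$) to reduce to $r\cdot t=r+s\cdot\gamma$, and then splits into cases according to whether $r\in D$, concluding the element is $0$ or $t$. You instead isolate two independent facts: the decomposition $R[\gamma]=R\cup(R+\gamma)$, coming from $s\cdot\gamma\in\{0,\gamma\}$ for every $s\in R$, and the annihilation lemma $\gamma\cdot a=0$ for every atom $a$ of $R$, which you derive directly from the ultrafilter: an atom is standard finite, so $1+a\in Cofin^{ST}\subseteq D$, whence $a\notin D$ --- the same computation the paper used to show $\gamma\notin R$, now applied to arbitrary atoms rather than to a putative atom below $\gamma$. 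With these in hand, the case $t=r+\gamma\le a$ collapses in one line to the contradiction $\gamma=r\cdot a+r\in R$, and the case $t\in R$ is just atomicity of $a$ inside $R$. What your route buys: it does not depend on the prior claim that $\gamma$ is an atom of $R[\gamma]$, and it replaces the paper's case split on membership of $r$ in $D$ (whose printed step ``$r\in D$, hence $r\cdot t=0$'' is quite terse) by making the real mechanism, namely $t\notin D$ and hence $\gamma\cdot t=0$, fully explicit. The paper's version is marginally shorter given its earlier claims; yours is self-contained and arguably more transparent.
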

\begin{proof} Let $t$ be an atom of $R$. Assume that $(r+s.\gamma).t=r+s.\gamma$. 
Then since $\gamma$ is an atom (which is by assumption different from $t$), we have 
$$r.t=r+s.\gamma.$$
We have two cases. First case is when $r\in D$, hence $r.t=0$. In this case $r+s.\gamma=0$. The second case is when 
$r\notin D$, hence $r.t=t$. In this case we have $t=r+s.\gamma$.
\end{proof}

\begin{claim} Any atom of $R[\gamma]$ is either an atom of $R$ or $\gamma$.
 
\end{claim}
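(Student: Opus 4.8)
The plan is to first pin down the possible shapes of an element of $R[\gamma]$, and then dispatch the two resulting forms of a candidate atom. Since $s.\gamma=\gamma$ when $s\in D$ and $s.\gamma=0$ when $s\notin D$, every element of $R[\gamma]$ can be written either as $r$ or as $r+\gamma$ with $r\in R$. So let $x$ be an atom of $R[\gamma]$. If $x\in R$, then $x$ is already an atom of $R$: any $y\in R$ with $0<y<x$ would also lie in $R[\gamma]\supseteq R$, contradicting that $x$ is an atom of $R[\gamma]$. This disposes of the first alternative, and it remains to treat $x=r+\gamma$ with $r\in R$ (here necessarily $x\notin R$, since $\gamma\notin R$), for which I must show $x=\gamma$.

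Next I would compare such an $x$ with $\gamma$ via $\gamma.x=\gamma.r+\gamma$. If $r\notin D$ then $\gamma.r=0$, so $\gamma.x=\gamma$, i.e.\ $\gamma\leq x$; since $\gamma\neq 0$ and $x$ is an atom, this forces $x=\gamma$, as required. Thus the only configuration left to exclude is $r\in D$, where $\gamma.r=\gamma$ gives $\gamma.x=0$, so $x$ is disjoint from $\gamma$ and $x\neq\gamma$. I must show that no such $x$ can be an atom.

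This last step is the crux, and it is where the non-principality of $D$ is essential. Since $r\in D$ and $D$ is a non-principal ultrafilter, $r$ is not an atom of $R$ (a non-principal ultrafilter contains no atom), so $r=r_1+r_2$ with $r_1,r_2$ disjoint and nonzero; since $r_1\cup r_2=r\in D$ and $r_1.r_2=0$, exactly one of them, say $r_1$, lies in $D$, whence $r_2\notin D$ and $\gamma.r_2=0$. As $r\in D$ gives $\gamma\leq r$, we have $x=r+\gamma=r\setminus\gamma$, and a short computation yields $r_2.x=r_2.r+r_2.\gamma=r_2$, so $0<r_2\leq x$, while $x+r_2=r_1+\gamma=r_1\setminus\gamma\neq 0$ (because $r_1\in R$ but $\gamma\notin R$, so $r_1\neq\gamma$). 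Hence $0<r_2<x$, contradicting that $x$ is an atom. This rules out the case $r\in D$ and completes the classification. I expect the production of the proper nonzero subelement $r_2<x$ to be the main obstacle, resting entirely on the fact that $D$ — built to contain the nonstandard finite element $b$ together with all of $Cofin^{ST}$ — is non-principal.
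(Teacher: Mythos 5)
Your proof is correct, and in the decisive case it is actually more careful than the paper's own argument. The skeleton agrees: both reduce every element of $R[\gamma]$ to the shape $r$ or $r+\gamma$ (using that $s.\gamma\in\{0,\gamma\}$), both dispose of atoms lying in $R$ immediately, and both observe that if $r\notin D$ then $\gamma.(r+\gamma)=\gamma.r+\gamma=\gamma$, so $\gamma\leq r+\gamma$ and atomicity forces $r+\gamma=\gamma$. Where you diverge is the case $r\in D$: the paper's one-line computation $\gamma.(r+\gamma)=\gamma.r+\gamma=\gamma$ is valid only when $\gamma.r=0$, i.e.\ $r\notin D$; when $r\in D$ one instead gets $\gamma.(r+\gamma)=\gamma+\gamma=0$, and the paper's proof as written simply does not address this configuration. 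You fill that gap with a genuine extra idea: since $D$ is non-principal it contains no atom of $R$ (any atom in an ultrafilter generates it), so $r\in D$ splits as a disjoint sum $r_1+r_2$ of nonzero pieces with exactly one, say $r_1$, in $D$; then $r_2.(r+\gamma)=r_2.r+r_2.\gamma=r_2\neq 0$, while $(r+\gamma)+r_2=r_1+\gamma\neq 0$ because $\gamma\notin R$, so $0<r_2<r+\gamma$ and $r+\gamma$ is not an atom. All ingredients you use (primality of the ultrafilter, $\gamma\notin R$, non-principality of $D$) are established earlier in the paper, so the argument is sound as it stands. For comparison, a slightly shorter patch in the spirit of the paper's subsequent claim that $R[\gamma]$ is atomic would use atomicity of $R$ instead: take an atom $t$ of $R$ below $r$; then $t$ is standard finite, so $1+t\in Cofin^{ST}\subseteq D$, hence $t\notin D$ and $t.\gamma=0$, giving $t.(r+\gamma)=t$ and $0<t<r+\gamma$ (strict because $t\in R$ but $r+\gamma\notin R$). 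Your route buys independence from atomicity of $R$, resting only on non-principality of $D$; the alternative buys brevity and uniformity with the atomicity claim that follows.
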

\begin{proof} Suppose $r+s.\gamma$ is an atom of $R[\gamma]$. Since $\gamma$ is an atom 
 $s.\gamma=0$ or $s.\gamma=\gamma$. So $r+s.\gamma$ is either $r$ or $r+\gamma$. 

In the former case, it is an old atom. In the latter, $r+\gamma$ must be an atom. But this is a contradiction since
$\gamma.(r+\gamma)=\gamma.r+\gamma=\gamma$. Hence $r+\gamma=\gamma$ since $\gamma \neq 0$ and $r+\gamma$ is an 
atom.
\end{proof}

\begin{claim} $R[\gamma]$ is atomic.
\end{claim}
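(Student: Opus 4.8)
The plan is to use the structural facts about $R[\gamma]$ already proved, together with the atomicity of $R$ (which holds because $\B$ is a model of the theory of infinite atomic Boolean algebras, so every nonzero element of $R$ has an atom of $R$ below it). First I would recall that, since $\gamma$ is an atom of $R[\gamma]$, the product $s\gamma$ equals $0$ or $\gamma$ for every $s\in R$; hence every element of $R[\gamma]=\{r+s\gamma:r,s\in R\}$ has the form $r$ or $r+\gamma$ with $r\in R$. It therefore suffices to exhibit an atom of $R[\gamma]$ below each nonzero element of these two shapes.

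For an element $r\in R$ with $r\neq 0$, atomicity of $R$ supplies an atom $t$ of $R$ with $t\leq r$; by the earlier claim $t$ is still an atom of $R[\gamma]$, and since the order on $R$ is the restriction of the order on $R[\gamma]$, we have $t\leq r$ in $R[\gamma]$, as wanted.

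The real work is the case $r+\gamma$ (which is nonzero, since $r+\gamma=0$ would force $r=\gamma\in R$). Here I would split on whether $r\in D$, using the defining relations $\gamma r=\gamma$ (when $r\in D$) and $\gamma r=0$ (when $r\notin D$). If $r\notin D$, the computation $\gamma(r+\gamma)=\gamma r+\gamma=\gamma$ shows $\gamma\leq r+\gamma$, so the atom $\gamma$ lies below it. If $r\in D$, the same computation gives $\gamma(r+\gamma)=\gamma+\gamma=0$, so $\gamma$ is now disjoint from $r+\gamma=r\setminus\gamma$ and cannot be used; this is the crux. To resolve it I would invoke the observation already used in proving $\gamma\notin R$, namely that no atom of $R$ lies in $D$: an atom $s$ of $R$ satisfies $\neg C_2(s)$, so $1+s\in Cofin^{ST}\subseteq D$ and hence $s\notin D$. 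Thus, choosing by atomicity of $R$ an atom $t\leq r$, we get $t\notin D$ automatically, so $t\gamma=0$, and therefore $t(r+\gamma)=tr+t\gamma=t$, i.e.\ $t\leq r+\gamma$; so $t$ is an atom of $R[\gamma]$ below $r+\gamma$.

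The main obstacle is exactly the subcase $r\in D$, where the new atom $\gamma$ fails to lie below $r+\gamma$ and one is forced to descend to an old atom of $R$ lying below $r$ but disjoint from $\gamma$. What makes this step go through painlessly is the fact that $D$, having been built to contain all of $Cofin^{ST}$, contains no atom of $R$ whatsoever, so any atom of $R$ below $r$ is automatically disjoint from $\gamma$.
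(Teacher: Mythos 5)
Your proof is correct and takes essentially the same route as the paper's: the crux in both is that every atom $s$ of $R$ is standard finite, so $1+s\in Cofin^{ST}\subseteq D$, whence $s\notin D$, $s.\gamma=0$, and $s.(r+\gamma)=s$, giving an old atom below $r+\gamma$. Your case split on whether $r\in D$ is harmless but not needed --- the paper applies this old-atom argument uniformly for $r\neq 0$ (and your observation that $\gamma$ itself handles $r=0$ fills in a point the paper leaves implicit).
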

\begin{proof} Consider $r+\gamma \in R[\gamma]$. Let $s$ be an atom of $R$ below $r$. Then 
$$1+s\in Cofin^{ST} \subset D,$$
so $s.\gamma=0$. Hence $s.(r+\gamma)=s$, 
thus $s\leq r+\gamma$.
\end{proof}

Now we can finish the proof of the Theorem. $R[\gamma]$ is infinite atomic and a model of the axioms 
of Examples 1 and 2. The predicates $C_k$ thus have an interpretation in $R[\gamma]$.

Note that for every element $a$ of $R$ satisfying $Fin$, $a\notin D$, hence $\gamma.a=0$, so 
$$\gamma=\gamma.1=\gamma(a+(1-a))=\gamma.(1-a),$$
so $\gamma$ lies below every cofinite element of $R$. 

We give an interpretation $Fin^*$ of the predicate $Fin$ in $R[\gamma]$ as
$$Fin^*(r+\gamma)\Leftrightarrow Fin(r).$$
$$Fin^*(r)\Leftrightarrow Fin(r),~\mathrm{for}~r\in R$$
Note that $Fin^*$ is closed under addition since $Fin$ is so. Moreover 
$$r.(s+\gamma)=r.s+\gamma,$$
$$(r+\gamma)(s+\gamma)=r.s+(r+s).\gamma+\gamma=r.s+\gamma,$$
hence $Fin^*$ is closed under multiplication and is an ideal in $R[\gamma]$ since $Fin$ is so.

As for the predicates $C_k$ we define
$$C_k(r+\gamma)\Leftrightarrow C_k(r)$$
if $r\notin D$, and 
$$C_k(r+\gamma)\Leftrightarrow C_{k-1}(r)$$
if $r\in D$.

We need to show that the predicates $Fin,C_k$ and $\neg C_k$ are preserved in passing from $R$ to $R[\gamma]$.
This is clear for $Fin$. If $r\in R$ satisfies $C_k$ in $R$, then it clearly satisfies $C_k$ in $R[\gamma]$. 
Suppose $\neg C_l(r)$ holds in $R$. Then $r$ is a standard finite element. So
$$1+r\in Cofin^{ST}\subset D,$$
hence $\gamma(1+r)=\gamma$, so $\gamma.r=0$, therefore $\gamma$ does not lie below $r$, thus
$$R[\gamma]\models \neg C_l(r).$$
Since $\gamma$ is a new atom below any element $r\in D$ and $b\in D$, 
$b$ is not an even element in $R[\gamma]$, and 
$$R[\gamma] \nvDash Res(2,0)(b).$$
This proves that $Res(2,0)$ is not definable from $C_k$ and $Fin$.

In general, we give the predicates 
$Res(n,j)$ interpretations in $Fin^*$ as follows:
$$Res^*(n,j)(r)=Res(n,j+1)(r)~\mathrm{if}~r.\gamma=\gamma,$$
$$Res^*(n,j)(r)=Res(n,j)(r)~\mathrm{if}~r.\gamma=0,$$
$$Res^*(n,j)(r+\gamma)=Res(n,j+1)(r)~\mathrm{if}~r.\gamma=\gamma,$$
$$Res^*(n,j)(r+\gamma)=Res(n,j)(r)~\mathrm{if}~r.\gamma=0.$$

We now prove that it is not possible to define $Res(p,s)$ from the predicates $Fin, C_k,~k\geq 1$ and $Res(q^l,r)$ for primes $q\neq p$.
If there was such a definition, the defining formula would involve only finitely many predicates 
$$Res(q_1^{l_1},{r_1}),\dots,Res(q_m^{l_m},r_m),$$
with each $q_j\neq p$. 
Let $\pi=\prod_{1\leq j\leq m} q_j^{l_j}$. 

By the Chinese remainder theorem there is $N\in \Z$ satisfying the congruences 
$$N \equiv 0~(\mathrm{mod}~\pi)$$
$$N \equiv 1~(\mathrm{mod}~p).$$
Applying the above method we get an element $b\in D$ such that 
$$R\models Res(p,s)(b),$$ 
and new atoms $\gamma_1\dots,\gamma_N$ 
such that 
$$\gamma_i.r=\gamma_i,~\mathrm{if}~r\in D$$
$$\gamma_i.r=0,~\mathrm{if}~r\notin D.$$
The first congruence implies
$$R[\gamma_1,\dots,\gamma_N]\models Res(q_j^{l_j},r_j)(b),$$
and the second congruence 
shows that $Res(p,s)(b)$ does not hold in $R[\gamma_1,\dots,\gamma_N]$. The proof is complete.\end{proof}

\section{Connection to the model theory of restricted products and the ring of adeles of a number field}

By the Feferman-Vaught Theorems \cite{FV} and related results in \cite{DM1}, 
enrichments of infinite atomic Boolean algebras are relevant to the elementary theory of restricted products and adeles. 

Given a language $\mathcal{L}$, an $\mathcal{L}$-formula $\psi(x)$, and $\mathcal{L}$-structures $\mathcal{M}_i$ ($i\in I$), 
the restricted product of $\mathcal{M}_i$ with respect to $\psi(x)$, denoted $\prod^{(\psi)}_{i\in I} \mathcal{M}_i$, 
is defined as the set of all $(a_i)\in \prod_{i\in I} \mathcal{M}_i$ such that 
$$\mathcal{M}_i\models \psi(a_i)~\mathrm{for~almost~all}~i\in I.$$  
Given an $\mathcal{L}$-formula $\Phi(x_1,\dots,x_n)$, and $f_1,\dots,f_n\in \prod_{i\in I} \mathcal{M}_i$, 
the Boolean value is defined by
$$[[\Phi(f_1,\dots,f_n)]]=\{i\in I: \mathcal{M}_i\models \Phi(f_1(i),\dots,f_n(i))\}.$$

Given an extension $\mathcal{L}_{\mathcal{B}}$ of the language of Boolean 
algebras, an $\mathcal{L}_{\mathcal{B}}$-formula $\Psi(z_1,\dots,z_m)$, and $\mathcal{L}$-formulas 
$$\Phi_1(x_1,\dots,x_n),\dots,\Phi_m(x_1,\dots,x_n),$$
enrich the restricted product $\prod^{(\psi)}_{i\in I} \mathcal{M}_i$ by $n$-place relations defined by 
$$Powerset(I)^+\models \Psi([[\Phi_1(\bar x)]],\dots,[[\Phi_m(\bar x)]]),$$
where $Powerset(I)^+$ is the enrichment of $Powerset(I)$ to an $\mathcal{L}_{\mathcal{B}}$-structure. 
These $n$-place relations, for all $n$, yield a language for the restricted product. 

By \cite{FV,DM1}, generalized products  
have quantifier elimination in this language. This reduces the study of the elementary theory of 
$\prod^{(\psi)}_{i\in I} \mathcal{M}_i$ and its definable subsets to that of the enriched 
Boolean algebra $Powerset(I)^+$ and the factors $\mathcal{M}_i$. 

The choice of enrichment of the Boolean algebra $Powerset(I)$ has thus applications 
to the elementary theory and study of definable subsets of 
restricted products.

In this way, the enrichments of $Powerset(I)$ by the predicates in Examples 1,2, and 3, and 
Theorems \ref{thm-1},\ref{thm-2}, and 
\ref{thm-3} allow us to get decidability and quantifier elimination for the ring of adeles of a number field in 
a language stronger than the language of rings, relevant to such matters as the 
product formula for Hilbert symbol (cf.~\cite{DM2}).


\begin{thebibliography}{1}

\bibitem{Birkhoff}
{\sc Birkhoff, G.}
\newblock {\em Lattice theory}.
\newblock American Mathematical Society Colloquium Publications, Vol. XXV.
  American Mathematical Society, Providence, R.I., 1961.
\newblock Revised ed.

\bibitem{CK}
{\sc Chang, C.~C., and Keisler, H.~J.}
\newblock {\em Model theory}, second~ed.
\newblock North-Holland Publishing Co., Amsterdam, 1977.
\newblock Studies in Logic and the Foundations of Mathematics, 73.

\bibitem{DM2}
{\sc Derakhshan, J., and Macintyre, A.}
\newblock Model theory of adeles {I}.
\newblock {I}n preparation.

\bibitem{DM1}
{\sc Derakhshan, J., and Macintyre, A.}
\newblock Some supplements to {F}eferman-{V}aught related to the model theory
  of adeles.
\newblock {S}ubmitted for publication, ar{X}iv:1306.1794.

\bibitem{Ershov}
{\sc Er{\v{s}}ov, J.~L.}
\newblock Decidability of the elementary theory of relatively complemented
  lattices and of the theory of filters.
\newblock {\em Algebra i Logika Sem. 3}, 3 (1964), 17--38.

\bibitem{FV}
{\sc Feferman, S., and Vaught, R.~L.}
\newblock The first order properties of products of algebraic systems.
\newblock {\em Fund. Math. 47\/} (1959), 57--103.

\bibitem{Hodges}
{\sc Hodges, W.}
\newblock {\em Model theory}, vol.~42 of {\em Encyclopedia of Mathematics and
  its Applications}.
\newblock Cambridge University Press, Cambridge, 1993.

\bibitem{KK}
{\sc Kreisel, G., and Krivine, J.-L.}
\newblock {\em Elements of mathematical logic. {M}odel theory}.
\newblock Studies in Logic and the Foundations of Mathematics. North-Holland
  Publishing Co., Amsterdam, 1967.

\bibitem{Michaux}
{\sc Michaux, C., and Villemaire, R.}
\newblock Presburger arithmetic and recognizability of sets of natural numbers
  by automata: new proofs of {C}obham's and {S}emenov's theorems.
\newblock {\em Ann. Pure Appl. Logic 77}, 3 (1996), 251--277.

\end{thebibliography}

\end{document}